\newcommand{\ra}{\rightarrow}
\newcommand{\pr}{\prime}
\newcommand{\de}{\partial}
\newcommand{\R}{\mathbb{R}}
\newcommand{\Z}{\mathbb{Z}}
\DeclareMathOperator{\id}{id}
\newtheorem{thm}{Theorem}
\theoremstyle{definition}
\newtheorem{defin}{Definition}
\newtheorem*{defin*}{Definition}
\newtheorem{note}{Note}
\theoremstyle{plain}
\newtheorem{lemma}{Lemma}
\begin{document}

\setlength{\belowdisplayskip}{0.5em}
\setlength{\belowdisplayshortskip}{0.5em}
\setlength{\abovedisplayskip}{-0.5em}
\setlength{\abovedisplayshortskip}{-0.5em}

\title{Width of Codimension Two Knots}

\author{Michael Freedman}
\address{\hskip-\parindent
	Michael Freedman\\
	Microsoft Research, Station Q, and Department of Mathematics\\
    University of California, Santa Barbara\\
    Santa Barbara, CA 93106\\}
\email{mfreedman@math.ucsb.edu}

\author{Jonathan Hillman}
\address{\hskip-\parindent
	Jonathon Hillman\\
	School of Mathematics and Statistics\\
	University of Sydney\\
	NSW 2006, Australia\\}
\email{jonathan.hillman@sydney.edu.au}

\begin{abstract}
	We extend the classical definition of \emph{width} to higher dimensional, smooth codimension 2 knots and show in each dimension there are knots of arbitrarily large width.
\end{abstract}

\maketitle

\section{Introduction and statements of Theorems}

\begin{defin}
	Given a smooth embedding $K: S^n \hookrightarrow \R^{n+2}$, let $\pi: \R^{n+2} \ra \R^n$ be any composition $\R^{n+2}\xrightarrow{d}\R^{n+2}\ra\R^n$, 
when the first map is any diffeomorphism and the second map is projection onto the last $n$-coordinates. The \emph{width} of $K$, $w(K)$ is

\[
	w(K) = \mathrm{minimax}|K(S^n) \cup \pi^{-1}(P)|
\]
where the minimum is over the choice of product projections $\pi$ and the maximum is over regular values $p \in \R^n$ for the composition $\pi \circ K$. 
We use $|~|$ to denote cardinality.
\end{defin}

In the classical case, $n = 1$, this is nearly the usual definition of width. Two details should be noted: for the classical unknot $\mathrm{U}$ our definition gives 0 (not 2) since $\mathrm{U}$ can lie in a plane $= \pi^{-1}(p)$ and thus have no regular point. Otherwise, our definition is the same as minimizing over Morse functions the maximum intersection with a generic level. Our definitions should not be confused with a more elaborate count, \emph{Gabai width}, introduced to study properly $\R$ \cite{gabai}.

Classically, it is well known that a nontrivial $K$ has $w(K) \geq 4$ and that if $K^\pr$ is a satellite of homological degree $d$ of $K$ nontrivial, then

\[
w(K^\pr) = dw(K)
\]

Here we prove weaker analogs of these classical facts for $n>1$. To formulate these we define \emph{homological width} $w_H(K)$, and for this we set $\overline{K}:S^n \times D^2 \hookrightarrow \R^{n+2}$ to be a real analytic embedding (for a technical reason\footnote{All smooth maps and structures may be approximated by real-analytic version. We need this version at some point to stratify and triangulate non-regular values.}) of the tubular neighborhood of $K$, and $d:\R^{n+2} \ra \R^{n+2}$ also now real analytic.

\begin{defin}
	$w_H(K) = \mathrm{minimax}[\overline{K}(S^n \times D^2 \cup \pi^{-1}(p)]$, where again the minimum is over product projections $\pi$ and the maximum is over regular values $p \in \R^n$. But now the square brackets $[-]$ denotes "the number of connected components of -- which represent a non-zero homology class in $H_2(S^n \times D^2, \de;\Z)$".
\end{defin}

\begin{note}
	An easy homological argument shows $w(K) \geq w_H(K)$
\end{note}

\begin{note}
	In the classical case, $n = 1$, $w_H(\text{unknot}) = 0$; the unknotted solid torus in $\R^3$ may be sliced by homologically trivial annuli. This explains why width is only multiplicative under satellite for \emph{non-trivial} knots.
\end{note}

\begin{note}
	For notational convenience we have located our knots in $\R^{n+2}$ rather than its one-point-compactification $S^{n+2}$. This makes it slightly easier to discuss projection onto $\R^n$.
\end{note}

\begin{defin}
	We say a knot $K: S^n \hookrightarrow \R^{n+2}$ is \emph{cohomologically rich} if the knot complement 
$Y \coloneqq \R^{n+2} \backslash K(S^n)$ admits a covering space $\widetilde{Y}$ with a nontrivial $(n+1)$-fold cup product. 
That is, for some coefficient field $F$ there are classes $\alpha_0, \dots, \alpha_n \in H_1(\widetilde{Y}; F)$ so that 
$\alpha_0 \cup \dots \cup \alpha_n \neq 0 \in H^{n+1}(\widetilde{Y}; F)$. 
If $K$ is not cohomologically rich we call it \emph{cohomologically poor}. In our examples it suffices to set $F = \Z_2$.
\end{defin}

\begin{note}
	All nontrivial classical knots $S^1 \hookrightarrow S^3$, are cohomologically rich. 
	It is sufficient to consider the cover induced by the inclusion of the peripheral torus.
	On the other hand, if an $n$-knot is cohomologically rich then its knot group has cohomological dimension $\geq{n+1}$.
	In particular, Artin (untwisted) spins of classical knots (and their higher dimensional analogs) are cohomologically poor,
	since classical  knot groups have cohomological dimension $\leq2$.
         We shall use {\it twist}-spins (in \S3) to construct examples of cohomologically rich $n$-knots.
\end{note}

\begin{thm}
	If $w_H(K) = 0$ then $K$ is cohomologically poor.
\end{thm}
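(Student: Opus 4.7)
The plan is a Lusternik--Schnirelmann style cup-length argument driven by the projection $\pi$ that realises $w_H(K)=0$. The goal is to bound the cup length of $H^{*}(\tilde Y;F)$ by $n$ for every cover $\tilde Y\to Y$, from which cohomological poverty is immediate. Fixing a real-analytic product projection $\pi:\R^{n+2}\to\R^n$ achieving $w_H(K)=0$, the Lebesgue covering dimension of $\R^n$ yields an open cover $\R^n=U_0\cup\cdots\cup U_n$ in which each $U_i$ is a disjoint union of small contractible open pieces $W_i^\alpha$; by real-analyticity of $\pi$, one arranges each $W_i^\alpha$ to respect the finite semi-analytic stratification of $\R^n$ cut out by the singular values of $\pi|_{\overline K(S^n\times D^2)}$. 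Pulling back gives an open cover $V_i=\pi^{-1}(U_i)\cap Y$ of $Y$, and $\tilde V_i=p^{-1}(V_i)$ of $\tilde Y$ for any covering $p:\tilde Y\to Y$.

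The cup-length bound reduces to the following claim: for each component $W=W_i^\alpha$ of some $U_i$, the restriction $H^1(\tilde Y;F)\to H^1(p^{-1}(\pi^{-1}(W)\cap Y);F)$ is zero. Granted this, any classes $\alpha_0,\ldots,\alpha_n\in H^1(\tilde Y;F)$ lift to relative classes $\tilde\alpha_i\in H^1(\tilde Y,\tilde V_i;F)$, and the cup product $\tilde\alpha_0\smile\cdots\smile\tilde\alpha_n$ lies in $H^{n+1}(\tilde Y,\bigcup_i\tilde V_i;F)=H^{n+1}(\tilde Y,\tilde Y;F)=0$, giving $\alpha_0\smile\cdots\smile\alpha_n=0$.

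To prove the claim I would use $w_H(K)=0$ as follows. Write $T=\overline K(S^n\times D^2)$. By Mayer--Vietoris on the contractible slab $\pi^{-1}(W)\cong W\times\R^2=(\pi^{-1}(W)\cap Y)\cup(\pi^{-1}(W)\cap T)$, every 1-cycle in $\pi^{-1}(W)\cap Y$ is homologous in the slab to a 1-cycle $\gamma$ on $\pi^{-1}(W)\cap\partial T$. The hypothesis $w_H(K)=0$ says each component of a regular tube slice $\pi^{-1}(p)\cap T$ has trivial class in $H_2(S^n\times D^2,\partial;\Z)$, and so bounds a 3-chain in $T$ relative to $\partial T$. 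Using the finite semi-analytic stratification of $W$, these pointwise bounding 3-chains can be assembled into a parametric 3-chain in $T\cap\pi^{-1}(W)$ whose boundary null-homologises $\gamma$ inside $T\subset Y$. Because the entire construction takes place in a simply-connected slab, the null-homology lifts to any cover $\tilde Y$.

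The main obstacle I anticipate is this parametric assembly step. The hypothesis $w_H(K)=0$ provides only slice-by-slice null-homologies, and one needs to glue them into a bounding 3-chain defined over the entire contractible region $W$. Real-analyticity of $\pi$ is essential here, as it produces a finite semi-analytic stratification of $W$ along which the slice topology is constant; the assembly then proceeds by induction on strata of increasing codimension, with explicit handling of how the slice-components merge and split across stratum transitions. The subtlety is that individual boundary curves of a slice-component may carry nonzero meridian winding in $\partial T$ even when their sum does not, so the bounding 3-chain must be genuinely three-dimensional and cannot be taken to lie entirely in $\partial T$.
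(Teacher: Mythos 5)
Your overall strategy — cover $\R^n$ by $n+1$ families of pieces, pull back, and run a Lusternik--Schnirelmann cup-length argument on $\widetilde Y$ — is exactly the skeleton of the paper's proof, and the final algebra (lifting $\alpha_i$ to relative classes and cupping into $H^{n+1}(\widetilde Y,\widetilde Y;F)=0$) is correct. The gap is in the key claim, and it is a genuine one: you establish at best that a loop $\gamma$ in a piece $V=\pi^{-1}(W)\cap Y$ is \emph{null-homologous} in $Y$, but that is not what the LS argument needs, and it does not transfer to the cover. Null-homology in $Y$ does not lift to null-homology in $\widetilde Y$ with field coefficients: a loop that is a product of commutators in $\pi_1(Y)$ (hence null-homologous over $\Z$) can have lifts representing nonzero classes in $H_1(\widetilde Y;\Z_2)$. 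Your parenthetical remedy, "because the entire construction takes place in a simply-connected slab, the null-homology lifts," does not work: the slab $\pi^{-1}(W)$ is contractible, but the covering $\widetilde Y\to Y$ is not a covering of the slab (the slab contains $T$, which is outside $Y$), so there is nothing to lift the slab-homology into. Relatedly, the bounding $3$-chain you propose lives inside $T=\overline K(S^n\times D^2)$, which is \emph{not} a subset of $Y$ (it is the complement), so producing it does not on its own say anything about classes in $Y$, let alone in a cover of $Y$.

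What one actually needs, and what the paper proves, is that the inclusion of each piece is trivial on $\pi_1$: $\pi_1(Y^c_h)\to\pi_1(Y)$ is the zero map. That statement \emph{does} pass to covers (because $\pi_1(\widetilde Y)\to\pi_1(Y)$ is injective), and then gives $H_1(\widetilde Y_h;F)\to H_1(\widetilde Y;F)=0$ for every field $F$, which is the input the LS step requires. The mechanism in the paper is the one you stop just short of: after reducing to an end-parallel curve $\gamma$ on $\partial T\cong S^n\times S^1$, the hypothesis $w_H(K)=0$ is fed into a black/white alternating-region count on the planar slice to show that $\gamma$ has winding (meridional) degree $0$ in $H_1(S^n\times S^1;\Z)\cong\Z$; since $n>1$ the group $\pi_1(S^n\times S^1)\cong\Z$ is abelian, so degree zero forces $\gamma$ to be \emph{null-homotopic} on $\partial T$ and hence in $Y$. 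You explicitly note "individual boundary curves of a slice-component may carry nonzero meridian winding in $\partial T$ even when their sum does not," and respond by routing a genuinely $3$-dimensional chain through $T$ — but that is the step that loses the $\pi_1$ information and breaks the lift to covers. The fix is to prove the individual end-parallel curves are degree zero (which the $w_H=0$ hypothesis does give, via the tree-of-circles conservation argument), not to absorb a possibly nonzero degree into an ambient null-homology. Finally, your "parametric assembly" across strata is not just a technical obstacle to finesse: it is precisely where the paper's Lemma 4 does careful local work on the codimension-$2$ singular strata (Morse pairs and cusps), and any correct argument must address those cases explicitly at the level of $\pi_1$, not homology.
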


\begin{thm}
	If $K^\pr$ is a satellite of $K$ of homological degree $d_H$ then $w(K^\pr) \geq d_H w_H(K)$.
\end{thm}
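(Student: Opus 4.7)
The plan is to fix a single product projection $\pi$ realizing $w(K')$ and use it to control both counts simultaneously. The link between $w$ and $w_H$ is the intersection pairing on the tubular neighborhood $S^n \times D^2$: any homologically nontrivial $2$-dimensional slice of $\overline{K}$ must meet the pattern $P$ transversely in at least $d_H$ points, so each such slice contributes a multiplicative factor of $d_H$ to the satellite count.

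Choose a real-analytic product projection $\pi$ with $w(K') = \max_p |K' \cap \pi^{-1}(p)|$ (perturbing, if necessary, so that $\pi$ is generic with respect to $K'$, $\overline{K}(S^n\times D^2)$, and $\partial \overline{K}$). Let $p_0$ be a regular value of both $\pi\circ K'$ and $\pi\circ\overline{K}$ at which the homological count
\[
[\overline{K}(S^n\times D^2)\cap \pi^{-1}(p_0)]
\]
attains its maximum over regular values. By the minimax definition of $w_H$, this maximum is $\geq w_H(K)$. Writing $K'=\phi(P)$, where $\phi:S^n\times D^2\hookrightarrow \R^{n+2}$ is the tubular neighborhood embedding of $K$ and $[P]=d_H\,[S^n\times\{0\}]\in H_n(S^n\times D^2)\cong\Z$, every intersection point of $K'$ with $\pi^{-1}(p_0)$ lies inside the $2$-manifold slice $\overline{K}(S^n\times D^2)\cap\pi^{-1}(p_0)$.

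Now apply the intersection pairing. By Lefschetz duality, $H_2(S^n\times D^2,\partial;\Z)\cong H^n(S^n\times D^2)\cong\Z$, generated by the meridional disk $\{*\}\times D^2$, and its pairing with $[S^n\times\{0\}]$ equals $1$. Thus each component $\Sigma$ of $\overline{K}(S^n\times D^2)\cap\pi^{-1}(p_0)$ carries a class $k_\Sigma\in\Z$, and the algebraic intersection of $\Sigma$ with $P$ equals $k_\Sigma d_H$. If $\Sigma$ is homologically nontrivial then $k_\Sigma\neq 0$, so the geometric intersection satisfies $|\Sigma\cap P|\geq d_H|k_\Sigma|\geq d_H$. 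Summing over the nontrivial components,
\[
|K'\cap\pi^{-1}(p_0)|=\sum_{\Sigma}|\Sigma\cap P|\;\geq\;d_H\cdot[\overline{K}(S^n\times D^2)\cap\pi^{-1}(p_0)]\;\geq\;d_H\,w_H(K),
\]
and since $w(K')\geq|K'\cap\pi^{-1}(p_0)|$ the theorem follows.

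The main obstacle is transversality at $p_0$: we need $\pi^{-1}(p_0)$ to meet $\overline{K}$ cleanly so that each slice component is a smooth $2$-manifold with boundary on $\partial\overline{K}$, and we need $P$ to meet each such component transversely inside $S^n\times D^2$. This is exactly what the real-analytic hypothesis in the definition of $w_H$ provides: non-regular values form a stratified set of positive codimension in $\R^n$, so a dense open set of $p$ is simultaneously regular for $\pi\circ K'$, $\pi\circ\overline{K}$, and $\pi\circ(\partial\overline{K})$. The homological count is constant across components of this regular locus, so we may freely pick $p_0$ there.
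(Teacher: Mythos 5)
Your proof is correct and takes essentially the same approach as the paper's: for a fixed product projection $\pi$ and a well-chosen regular value $p_0$, lower-bound $|K'\cap\pi^{-1}(p_0)|$ by counting, via the intersection pairing on $H_2(S^n\times D^2,\partial;\Z)\otimes H_n(S^n\times D^2;\Z)\to\Z$, at least $d_H$ transverse intersections of the pattern with each homologically essential component of $\overline{K}(S^n\times D^2)\cap\pi^{-1}(p_0)$. Your write-up is a bit more careful than the paper's about the quantifier order in the minimax (fixing one $\pi$ and using that $\max_p$ of the homological count for that $\pi$ is at least $w_H(K)$) and about choosing $p_0$ simultaneously regular for $\pi\circ K'$, $\pi\circ\overline K$, and $\pi\circ\partial\overline K$.
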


To see that satellites of homological degree $d > 1$ indeed exist in all dimensions, 
consider the $(d,1)$-torus knot $C$ on $S^1 \times S^1$ pushed into interior $(S^1 \times D^2)$. 
This is the ur-example in dimension $n = 1$, 
and $(n-1)$-fold suspension produces examples in all dimension of $S^n \subset S^n \times D^2$ of degree $= d$.

\begin{thm}
	For every $n \geq 1$ there exist smooth $K: S^n \hookrightarrow \R^{n+2}$ which are cohomologically rich.
\end{thm}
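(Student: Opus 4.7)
The plan is to induct on $n$, constructing explicit cohomologically rich $n$-knots by twist spinning. The base case $n=1$ is already handled in the earlier Note: for any nontrivial classical knot $K$, the peripheral torus inclusion $T^2 \hookrightarrow Y$ is $\pi_1$-injective, so the cover $\widetilde{Y}$ corresponding to $\pi_1(T^2) = \Z^2$ satisfies $H^1(\widetilde{Y}; \Z_2) \cong H^1(T^2; \Z_2)$, and the nondegenerate cup square on $T^2$ pulls back to a nontrivial cup product in $H^2(\widetilde{Y}; \Z_2)$.

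For $n \geq 2$, I would use Zeeman's twist-spin theorem (with its higher-dimensional extension): if $K$ is an $(n-1)$-knot, then the complement $Y'$ of $\tau_r K \subset S^{n+2}$ fibers over $S^1$ with fiber $F = M_r(K) \setminus B^{n+1}$, where $M_r(K)$ is the $r$-fold cyclic branched cover of $S^{n+1}$ over $K$, and the monodromy of this fibration has order $r$. Consequently the $r$-fold cyclic cover of $Y'$ is diffeomorphic to $F \times S^1$, which by K\"unneth acquires an additional class $s \in H^1(S^1; \Z_2)$. Assuming inductively that $K_{n-1}$ is cohomologically rich with cover $\widetilde{Y}_{n-1}$ carrying a nontrivial $n$-fold cup product, I would set $K_n = \tau_r K_{n-1}$ and lift $\widetilde{Y}_{n-1}$ by pulling back along the inclusion $M_r(K_{n-1}) \setminus \widetilde{K}_{n-1} \hookrightarrow F$ (the left side being the $r$-fold cyclic cover of $Y_{n-1}$), then taking the product with $S^1$. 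Cupping the inherited $n$-fold product with $s$ should yield a nontrivial $(n+1)$-fold cup product in the resulting cover of $Y'$, establishing cohomological richness of $K_n$.

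The principal technical obstacle is that the transition from $M_r(K_{n-1}) \setminus \widetilde{K}_{n-1}$ to $F$ fills in the branch locus with a tubular neighborhood, killing the meridian of $\widetilde{K}_{n-1}$ in $\pi_1$ and potentially annihilating classes lifted from $\widetilde{Y}_{n-1}$. Making the induction go through requires choosing the twist parameter $r$ and the starting classical knot so that the classes carrying the cup product are not killed by the filling -- for instance, by restricting to families (such as iterated twist-spins of the trefoil with suitable $r$) where $M_r(K_{n-1})$ admits a finite cover retaining a toroidal factor, so that the peripheral-torus mechanism of the base case iterates cleanly through each dimension. Verifying this survival at every level is the main technical content of the construction in \S3.
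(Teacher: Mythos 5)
Your high-level plan---induct via twist-spin and pass to the cyclic cover $F\times S^1$ where the monodromy has finite order---matches the paper's strategy, but the base of your induction does not work, and you flag the problem without actually resolving it. Starting from a classical knot (e.g.\ the trefoil), the $r$-fold cyclic branched cover $M_r$ of $S^3$ is typically a Brieskorn sphere or lens space with very small $H_1(\,\cdot\,;\Z_2)$; it is nowhere close to a $\Z_2$-cohomology $T^3$, and the peripheral-torus mechanism you invoke for $n=1$ simply does not survive the first twist-spin. More fundamentally, the inductive invariant you need is not "some cover contains a toroidal factor" but rather "$K_n$ is fibered with closed fiber a $\Z_2$-cohomology torus $T^{n+1}$," and that is what makes the cyclic cover $F_-\widetilde{\times}\, S^1$ have the cohomology ring of $T^{n+1}_-\times S^1$ with a nontrivial $(n+1)$-fold cup product. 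No classical knot provides this, since a fibered $1$-knot has a surface fiber, not a torus.

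The paper gets off the ground by seeding the induction at $n=2$ with the Cappell--Shaneson $2$-knot $K_2$, which is fibered over $S^1$ with fiber a punctured $T^3$ and monodromy $A_2\in SL(3,\Z)$. Reducing $A_2$ mod $2$, it has finite order $t_2$ (in fact $7$) in $SL(3,\Z_2)$, so the $t_2$-fold cyclic cover of the complement has the $\Z_2$-cohomology of $T^3_-\times S^1$; then each subsequent $t_n$-twist-spin has fiber the $t_n$-fold branched cover, which is again a $\Z_2$-cohomology torus because $A^{t_n}\equiv I$ mod $2$. The paper's Lemma 5 (that the mapping torus of $A\in SL(3,\Z)$ with $A\equiv I$ mod $p$ has $\Z_p$-cohomology generated by degree one) is precisely what guarantees the cup-product \emph{ring} structure is carried along, not merely the Betti numbers---a point your sketch leaves entirely open. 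So the gap is concrete: you need a different seed (a fibered $2$-knot with $T^3$ fiber and monodromy trivial mod $2$ to some power), and you need an argument like Lemma 5 to control the ring structure at each stage.
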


And immediate consequence of Theorems 1, 2, and 3 is

\begin{thm}
	For all $n \geq 1$ there are smooth knots $K: S^n \hookrightarrow \R^{n+2}$ with arbitrarily large width $w(K)$.
\end{thm}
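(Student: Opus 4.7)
The plan is to chain Theorems 1, 2, and 3. Fix $n \geq 1$. Theorem 3 supplies a smooth cohomologically rich knot $K_0 : S^n \hookrightarrow \R^{n+2}$, and the contrapositive of Theorem 1 then forces $w_H(K_0) \geq 1$: if $w_H(K_0)$ were zero, $K_0$ would have to be cohomologically poor, contradicting our choice.

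Next, for each integer $d \geq 1$, I would produce a smooth satellite $K_d$ of $K_0$ of homological degree $d_H = d$. The required pattern is the one sketched in the paragraph preceding Theorem 3: take the $(d,1)$-torus curve on $S^1 \times S^1$, push it into $\mathrm{int}(S^1 \times D^2)$ so that it represents $d$ times the generator of $H_1(S^1 \times D^2)$, and apply $(n-1)$-fold suspension to obtain an embedded $S^n \hookrightarrow S^n \times D^2$ of homological degree $d$. Inserting this pattern into the real-analytic tubular neighborhood $\overline{K}_0(S^n \times D^2) \subset \R^{n+2}$ yields the desired $K_d$.

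Theorem 2 then gives
\[
w(K_d) \geq d_H \cd w_H(K_0) \geq d.
\]
Since $d$ may be taken arbitrarily large, the family $\{K_d\}_{d \geq 1}$ realizes arbitrarily large width among smooth $n$-knots, proving the theorem.

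As advertised in the paper, this is an immediate consequence of the previous results, and I do not anticipate any serious obstacle at this step. The one technical point I would want to verify explicitly is that the $(n-1)$-fold suspension of the $(d,1)$-pattern is smooth and still has homological degree exactly $d$; but this is routine given the explicit torus-knot model. All heavier inputs --- existence of a cohomologically rich $K_0$, the Theorem 1 lower bound on $w_H$, and the Theorem 2 satellite inequality --- are already established earlier in the paper.
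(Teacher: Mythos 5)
Your proposal is correct and is essentially the paper's own argument: use Theorem~3 to obtain a cohomologically rich seed knot, invoke the contrapositive of Theorem~1 to conclude $w_H(K_0)\geq 1$, take satellites of arbitrary homological degree $d$ (via the $(d,1)$-pattern suspended $(n-1)$ times, as the paper notes before Theorem~3), and apply Theorem~2. You have merely filled in the same steps with a bit more explicitness than the paper's one-line proof.
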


\begin{proof}[Proof of Theorem 4]
	It is only necessary to have a seed $K$, provided by Theorem 3, with $w_H(K) > 0$ in each dimension. 
	Such a $K$ has positive $w_H(K)$ so a satellite $K^\pr$ will, by Theorem 2, have $w(K^\pr) \geq dw_H(K) \geq d$.
\end{proof}

\begin{proof}[Proof of Theorem 2]
	Since width is defined using regular values we may assume any intersection with the product 2-planes of interest, $\pi^{-1}(p)$, $p$ regular, and $K^\pr$ must have $d|d_j|$ transverse intersections with the $j^{\text{th}}$ component $Q_j$ of $\overline{K}(S^n \times D^2 \cup \pi^{-1}(p))$, where $[Q_j] = d_j \in H_2(S^n \times D^2, \de; \Z)$. By the definition of $w_H(K)$ there will be at a maximal 2-plane with precisely $w_H(K)$ indices $j$ for which $|d_j| \geq 1$, so $w(K^\pr) \geq \sum_j d|d_j| \geq d$.
\end{proof}

\section{Proof of Theorem 1}

Let us use the notation $X = \overline{K}(S^n \times D^2)$ and $Y$ the open complement, $Y \coloneqq \R^{n+2} \backslash X$. We need to \emph{assemble} $X$ and even more importantly $Y$ from the preimages $X_p = \pi^{-1}(p)$ and $Y_p = \R^2_p \backslash X_p$. To do this we use analyticity to control the singularities of the composition

\begin{equation}
	S^n \times D^2 \xrightarrow{\overline{K}} \R^{n+2} \xrightarrow{\pi} \R^n
\end{equation}

First we use a two dimensional analysis to understand each $Y_p$ in light of the hypothesis $w_H(K) = 0$ which allows us to assume every component $Q$ of every regular $X_p$ is inessential, $[Q] = 0 \in H_2(S^n \times D^2, \de; \Z)$. The following lemma shows that for generic $p$, $Y_p \hookrightarrow Y$ is null homotopic.

\begin{lemma}\label{scc}
	Assume $n>1$ and that we are in the situation where for all generic $p \in \R^n$ all components $Q$ of $X_p$ represent the trivial relative class in $H_2(S^n \times D^2, \de; \Z)$. Let $\gamma$ be any scc parallel to an end of $Y_p$, then the degree of that end $[\gamma] = 0 \in H_1(S^n \times S^1;\Z)$, and therefore the inclusion $Y_p \in Y$ is null homotopic.
\end{lemma}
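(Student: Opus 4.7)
The plan is to reduce the lemma to a linking-number computation inside the slice plane $\R^2_p$, and then to promote the resulting null-homology to a null-homotopy using the fact that $\pi_1(S^n\times S^1)$ is abelian for $n>1$.

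For a generic regular value $p$, real-analyticity of $\pi\circ\overline{K}$ guarantees that $X_p\coloneqq\pi^{-1}(p)\cap X$ is a tame compact $2$-submanifold of the plane $\R^2_p\coloneqq\pi^{-1}(p)$; each component $Q$ of $X_p$ is therefore a compact planar surface (a disk with holes) whose boundary is a disjoint union of simple closed curves on $\partial X = S^n\times S^1$. By definition, an scc $\gamma$ parallel to an end of $Y_p$ is a small pushoff in $\R^2_p$ of one such boundary circle $c\subset\partial X_p$; the two cobound a planar annulus in $Y_p$, so it suffices to show $[c]=0$ in $H_1(S^n\times S^1;\Z)\cong\Z$ (the identification using $n>1$). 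Under Alexander duality this class is precisely the linking number $\mathrm{lk}(c, K(S^n))$.

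To compute $\mathrm{lk}(c,K)$, I would apply the Jordan curve theorem in $\R^2_p$ to obtain a compact disk $D$ with $\partial D = c$. Every other component of $X_p$ is connected and disjoint from $c$, hence lies entirely inside or entirely outside $D$; the component $Q$ itself is either contained in $D$ (if $c$ is an outer boundary of $Q$) or meets $D$ only along $c$ (if $c$ is an inner boundary). Consequently $D\cap X$ is a disjoint union of finitely many \emph{complete} components $Q_{j_1},\ldots,Q_{j_r}$ of $X_p$ (plus possibly the curve $c$ itself, which is harmless since $K(S^n)\subset X^\circ$ is disjoint from $\partial X$). Then $\mathrm{lk}(c,K)=D\cdot K=\sum_k Q_{j_k}\cdot K$, each summand computed inside the oriented $(n+2)$-manifold $X\cong S^n\times D^2$. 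The Poincar\'e--Lefschetz pairing $H_n(X)\otimes H_2(X,\partial X)\to\Z$ is unimodular, with $[K]\in H_n(X)$ pairing to $1$ against the meridian-disk generator of $H_2(X,\partial X)\cong\Z$; hence $Q_{j_k}\cdot K$ equals the coefficient of $[Q_{j_k}]$ under this identification, which vanishes by hypothesis. Summing, $\mathrm{lk}(c,K)=0$, establishing the first assertion.

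For the "therefore" clause: since $n>1$, $\pi_1(S^n\times S^1)=H_1(S^n\times S^1)=\Z$, so the vanishing of $[c]$ in $H_1$ upgrades to $c$ being null-homotopic in $\partial X$; pushing the bounding disk outward through a collar of $\partial X$ in $\R^{n+2}$ realizes this as a null-homotopy of $\gamma$ in $Y$. The open planar surface $Y_p$ deformation retracts onto a finite wedge of circles whose free generators are loops around the bounded ends, each conjugate to such a $\gamma$; hence the induced map $\pi_1(Y_p)\to\pi_1(Y)$ is trivial, and since $Y_p$ has the homotopy type of a $1$-dimensional CW complex, the inclusion $Y_p\hookrightarrow Y$ is null-homotopic. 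I expect the main technical obstacle to lie in the very first step: guaranteeing that for a generic $p$ the slice $X_p$ is tame enough, namely a finite union of compact planar surfaces rather than something pathological, so that the Jordan-disk and transverse-intersection bookkeeping really go through. This tameness is precisely what the real-analytic hypothesis flagged in the paper's footnote is designed to supply.
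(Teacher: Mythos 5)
Your proposal is correct and follows essentially the same line of reasoning as the paper: bound $\gamma$ (or its parallel curve $c$) by a Jordan disk $D$ in $\R^2_p$, observe that $K$ can only meet $D$ inside the complete components $Q_{j_k}$ of $X_p$ that $D$ swallows, and then use the hypothesis $[Q_{j_k}]=0\in H_2(S^n\times D^2,\partial)$ together with the unimodularity of the intersection pairing to kill each contribution. The only stylistic difference is that you compress the paper's black/white "conservation law plus tree induction" into a single intersection-number identity $\mathrm{lk}(c,K)=D\cdot K=\sum_k Q_{j_k}\cdot K$, and your treatment of the "therefore" clause (null-homotope $c$ in $\partial X$ using $\pi_1=H_1$ for $n>1$, push into a collar, and finish with the fact that $Y_p$ has the homotopy type of a wedge of end-parallel circles) matches the paper's.
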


\begin{proof}
	Let $\Delta$ be the disk bounded by $\gamma$ in $\R^2_p = \pi^{-1}(p)$. $\Delta$ is alternately colored white and black by the parts outside (inside $\overline{K}(S^n \times D^2)$). The frontier circles each carry an integral degree in $H_1(S^n \times S^1;\Z)$. Quite generally these degrees must add to zero over the boundary of a white component (total linking number with the knot vanishes over the boundary of a necessarily null homologous white cycle), and our homological hypothesis implies that the same holds for black components as well. Thus we see a tree of sccs whose leaves (innermost circles) all have degree zero. The conservation laws just described implies that its root $\gamma$ also has degree zero.

	For planer surfaces such as $Y_p$ loops parallel to the ends normally generate the fundamental group. Since these map homologically trivially into $S^n \times S^1$, they are moreover null homotopic there. It follows that $\pi_1(Y_p) \ra \pi_1(Y)$ is zero and that $Y_p$ is null homotopic in $Y$.
\end{proof}

The singularities of $\pi \circ K$ are governed by Lojasiewicz's Lemma \cite{l64}.

\begin{lemma}\label{subc}
	The irregular values of $\pi \circ K$ can be triangulated as a finite $(n-1)$-subcomplex $J$ of $\R^n$
\end{lemma}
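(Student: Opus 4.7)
The plan is to apply Lojasiewicz's triangulation theorem for semi-analytic sets, which is precisely the theorem the real-analyticity hypothesis was introduced to access. Write $F = \pi \circ \overline{K}: S^n \times D^2 \to \R^n$; this is a real-analytic map from a compact real-analytic manifold with boundary. I would first identify the critical locus $\Sigma \subset S^n \times D^2$: the interior points where $dF$ fails to be surjective, together with the boundary points where $dF|_{S^n \times S^1}$ drops rank. In local real-analytic charts $\Sigma$ is cut out by the simultaneous vanishing of finitely many $n \times n$ minors of the Jacobian of $F$, hence $\Sigma$ is a compact semi-analytic subset of $S^n \times D^2$.

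Next I would observe that the irregular value set $J \coloneqq F(\Sigma)$ is the image of a compact semi-analytic set under a proper real-analytic map, hence a compact subanalytic subset of $\R^n$ by the Tarski--Seidenberg--Lojasiewicz machinery. Sard's theorem, applied to the smooth map $F$, shows that $J$ has Lebesgue measure zero in $\R^n$. Since every subanalytic subset of $\R^n$ of dimension $n$ contains a nonempty open ball, the measure-zero conclusion forces $\dim J \leq n-1$.

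Finally, Lojasiewicz's triangulation theorem for semi-analytic (and more generally subanalytic) sets produces a finite simplicial triangulation of a compact neighbourhood of $J$ in which $J$ is realised as a subcomplex. Compactness of $\Sigma$ makes $J$ compact, and together with the dimension bound this subcomplex is finite and of dimension at most $n-1$, which is the assertion of the lemma.

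The main obstacle I anticipate is organisational rather than substantive: one must handle the boundary and corners of $S^n \times D^2$ carefully so that $\Sigma$ and $J$ genuinely remain semi-analytic. Because the real-analytic hypothesis holds up to the boundary, the rank-drop locus of $F|_{\partial}$ is semi-analytic with subanalytic image of measure zero, so the boundary contribution can be folded into $\Sigma$ without altering any step of the argument.
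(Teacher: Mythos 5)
Your proof is correct and follows the same route the paper intends: the paper simply states ``The singularities of $\pi\circ K$ are governed by Lojasiewicz's Lemma'' and cites \cite{l64} with no further argument, so your derivation is precisely the unpacking of that citation (critical locus $\Sigma$ cut out by minors of the Jacobian, hence semi-analytic; image $J=F(\Sigma)$ under a proper analytic map; dimension bound from Sard; triangulate). One technical refinement you supply that is worth keeping is the semi-analytic versus subanalytic distinction: $\Sigma$ is semi-analytic but its image $J$ is in general only subanalytic, since semi-analytic sets are not closed under proper analytic images, so the triangulation theorem actually needed is the subanalytic version (Hironaka, Hardt) rather than the 1964 Lojasiewicz result as literally cited.
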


It is worth picturing what happens near $J$. Off $J$ $X_p$ is a compact smooth 2-manifold in $\R^2_p$. As $p$ moves toward $J$, bits of $X_p$ will pinch or join or birth/deaths appear. Across the $n-1$ cells $J_{n-1}$, of $J$, these are the familiar Morse singularities. Along $n-j$ cells various "codimension $j$" singularities occur which include, but are not limited to $j$ disjoint applications of Morse moves.

\begin{lemma}
	For all $p \in \R^n$, regular or irregular, the inclusion $Y_p \subset Y$ is null homotopic.
\end{lemma}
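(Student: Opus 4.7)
The plan is to reduce the case of irregular $p \in J$ to the generic case already established in Lemma \ref{scc}, using compactness of $X$ together with density of regular values (Lemma \ref{subc}). Adopt coordinates in which $\pi$ is projection onto the last $n$ factors of $\R^{n+2} \cong \R^2 \times \R^n$, so that $\R^2_p = \R^2 \times \{p\}$; in these coordinates $X$ remains compact. For every $p$, the slice $X_p$ is closed in $\R^2_p$, so $Y_p$ is an open planar $2$-manifold. Every component of an open surface deformation retracts onto a $1$-dimensional CW complex, and $Y$ is connected (it is the complement of a compact codimension-$2$ submanifold in $\R^{n+2}$), so to prove $Y_p \hookrightarrow Y$ null-homotopic it suffices to show that every loop $\ell \subset Y_p$ contracts in $Y$.

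Fix $p \in J$ and a loop $\ell \subset Y_p$. Since $\ell$ and $X$ are compact and disjoint, $\delta := d(\ell, X) > 0$. By Lemma \ref{subc} regular values are dense in $\R^n$, so we may choose a regular $p^* \in \R^n$ with $|p^* - p| < \delta$. Consider the straight-line homotopy
\[
H : \ell \times [0,1] \to \R^{n+2}, \qquad H(x,t) = x + (0,\, t(p^* - p)).
\]
Each trajectory moves $x$ by at most $|p^* - p| < \delta$, so $H(\ell \times [0,1])$ lies in the open $\delta$-neighbourhood of $\ell$, which is disjoint from $X$. Thus $H$ is a free homotopy in $Y$ carrying $\ell$ to the translate $\ell^* := H_1(\ell) \subset Y_{p^*}$.

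Since $p^*$ is regular, Lemma \ref{scc} gives that $Y_{p^*} \hookrightarrow Y$ is null-homotopic, hence $\ell^*$ contracts in $Y$; concatenating with $H$ yields a null-homotopy of $\ell$ in $Y$. As $\ell$ was arbitrary, the reduction in the first paragraph finishes the proof. The anticipated obstacle is that at an irregular $p$ the fibre $X_p$ may acquire Morse-type and higher-codimension singularities of the sort described after Lemma \ref{subc}, and the tree-of-scc argument underlying Lemma \ref{scc} cannot be applied directly. This is circumvented by analysing one compact loop at a time: the compactness of $X$ alone provides enough room to push $\ell$ into a nearby regular fibre, no matter how badly behaved $X_p$ itself may be.
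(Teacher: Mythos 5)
Your proposal is correct and takes essentially the same approach as the paper: push a loop from the irregular fiber $Y_p$ to a nearby regular fiber $Y_{p^*}$ using compactness, then invoke Lemma~\ref{scc}. The paper phrases this in terms of end-parallel simple closed curves remaining end-parallel under a small perturbation of $p$, whereas you translate an arbitrary loop via an explicit straight-line homotopy and use $d(\ell,X)>0$; your version is somewhat more explicit but is the same idea.
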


\begin{proof}
	In Lemma \ref{scc} we argued this for generic $p$ be showing each end-parallel $\gamma$ was null homotopic in $Y$. But for a nongeneric $p$ each end parallel $\gamma$ is \emph{also} end parallel in some nearby $Y_{p^\pr}$, $p^\pr$ generic, so the earlier argument still applies.
\end{proof}

Now take a fine handle decomposition of $\R^n$, fine with respect to the simplicities of $J$ so that the preimage of each $i$-handle $h$, $0 \leq i \leq n$, is the versal unfolding of some singularity of codimension $i$.

\begin{lemma}
	For each such handle $h$, let $Y_h \coloneqq Y \cup \pi^{-1}(h)$ and $Y_h^c \subset Y_h$ be an arbitrary connected component. All the inclusions $Y_h^c \subset Y$ induce the trivial map $\pi_1(Y^c_h) \xrightarrow{0} \pi_1(Y)$.
\end{lemma}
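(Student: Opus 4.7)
The plan is to induct on the handle index $i$, with the preceding lemma (every fiber inclusion $Y_p \hookrightarrow Y$ is null-homotopic) as the engine. The target is to reduce the map $\pi_1(Y_h^c) \to \pi_1(Y)$ to one that factors through $\pi_1(Y_p)$ for some $p \in h$, which is already known to be zero.

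Base case $i = 0$. A $0$-handle $h$ lies entirely in the complement of the critical-value complex $J$, so by Ehresmann's theorem the restriction $\pi|_{\pi^{-1}(h) \cap Y}$ is a trivial fiber bundle over the contractible base $h$. Each component $Y_h^c$ is then a product $h \times C$ with $C$ a component of $Y_p$. The inclusion $Y_h^c \hookrightarrow Y$ is homotopic to $C \hookrightarrow Y_p \hookrightarrow Y$, null-homotopic by the preceding lemma.

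Inductive step $i \geq 1$. I would write $h = D^i \times D^{n-i}$ in versal-unfolding coordinates, with the singular locus concentrated near $\{0\} \times D^{n-i}$, then construct a radial retraction $\rho_t$ of $D^i$ onto $\partial D^i$ (fixing the $D^{n-i}$ factor) and lift it to a homotopy $\widetilde{\rho}_t$ on $\pi^{-1}(h) \cap Y$. The lift comes from Thom's first isotopy lemma applied to the real-analytic Whitney stratification of $X \cap \pi^{-1}(h)$: away from the singular locus this reduces to the base-case Ehresmann argument, and near the singular locus the versal unfolding provides the standard local model from which the required controlled vector field is built. After the lifted retraction, $Y_h^c$ is pushed into $\pi^{-1}(\partial h) \cap Y$, which is covered by handles of strictly smaller index, and the inductive hypothesis finishes.

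The main obstacle is controlling the lifted flow across the singular stratum $\{0\} \times D^{n-i}$, where fibers change topologically by Morse and higher-codimension moves. Real analyticity---precisely the technical hypothesis inserted in \S1---guarantees the Whitney stratification needed to invoke Thom's first isotopy lemma, while the versal unfolding property guarantees that the local model of $\pi$ is the standard one. A possible alternative to building the global lift is a loop-by-loop approach: place $\gamma \subset Y_h^c$ in general position with respect to the stratification, then homotope $\gamma$ off strata of positive codimension in succession until it lies in a single regular fiber, where the preceding lemma finishes. In either approach the verification that the homotopy remains in $Y_h^c$---that it avoids the removed tube $X$---is the technical work that the real-analytic tubular structure $\overline{K}$ is designed to support.
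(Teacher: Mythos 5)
Your proposal takes a genuinely different route from the paper, and it has a gap at the crucial step. The paper does not induct on handle index: for a loop $\gamma\subset Y_h^c$ it takes a 2-disk $\delta$ in $h$ bounding $\pi(\gamma)$ and meeting the discriminant stratification generically, subdivides $\delta$ into small pieces, and checks by hand --- Figures 1(a)--(e) and the 3D model of Figure 2 --- that lifts of small loops around codimension-two strata contract in $Y$, ultimately reducing to the end-parallel argument of Lemma \ref{scc}. Your base case, and the loop-by-loop alternative you sketch at the end, are both closer in spirit to this.

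The gap is the inductive step. You propose to lift a radial retraction of $h$ onto its attaching region to a homotopy of $Y_h^c$ inside $Y$, citing Thom's first isotopy lemma. But the paper explicitly notes that $Y_{\delta_0}\to\delta_0$ is \emph{not} a fibration, and Thom--Mather control gives local topological triviality over strata of a proper stratified submersion --- it does not manufacture lifts of homotopies that cross the discriminant of a map with critical values. As $p$ crosses $J$ the planar slices $Y_p$ change topology: components of $Y_p$ are pinched away, split, or merge, and a controlled vector field lifting your retraction need not exist --- a point on a vanishing component of $Y_p$ has no continuous destination that stays in $Y$. Showing that loops nonetheless contract is precisely what the case analysis of Figures 1 and 2 accomplishes, and that is the content that your phrase \enquote{the versal unfolding provides the standard local model from which the required controlled vector field is built} silently skips. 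A secondary issue: the discriminant of a codimension-$i$ versal unfolding is a codimension-one (generally cuspidal) subvariety of $D^i$, not the slice $\{0\}\times D^{n-i}$, so the singular set does not sit transversely to your radial coordinate in the way your picture assumes. Until the appeal to Thom's lemma is replaced by a concrete local verification of the codimension-$\le 2$ singularity types, the argument is not closed.
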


\begin{proof}
	We need to show how to take a loop $\gamma \subset Y_h^c$ and contract it in $Y$. If $\gamma$ projects to a point $p \in h$ by Lemma 3 we may perturb $\gamma$ so $p$ is generic and apply Lemma \ref{scc}.

	Again perturbing $\gamma$ we may cross the Morse strata $J_{n-1}$ in $h \in \R^n$ transversely and a null homotopy $\delta$ for $\mathrm{image}(\gamma) \subset h$ meets the codimension 2 strata transversely. By subdividing $\delta$ is sufficient for us to check that the lift $\gamma_0$ of any small loop $\overline{\gamma}_0 \coloneqq \partial \delta_0$ encircling a codimension two strata $J_{n-2}$ is null homotopic in $Y$. It is well known (e.g. \cite{SR}) that $J_{n-2}$ consists of disjoint Morse singularities (saddles and/or birth/death) and cubic cusps. For reference, we illustrate several representative cases, the last two being most interesting.

	\begin{figure}[!ht]
		\begin{tikzpicture}
			\draw (-2, 0) -- (2, 0);
			\draw (0, -2) -- (0, 2);
			\draw (-1, 1) circle (0.25);
			\draw (-1.3, 0.7) -- (-0.7, 1.3);
			\draw (1, 1) circle (0.25);
			\draw (-1, -1) circle (0.25);
			\draw (1, -0.67) circle (0.25);
			\draw (1, -1.33) circle (0.25);
			\node at (0,-2.3) {($\xrightarrow{\text{birth}}$, birth$\downarrow$)};
			\node at (0, -2.9) {(a)};

			\draw (4, 0) -- (8, 0);
			\draw (6, -2) -- (6, 2);
			\draw (5, 1) circle (0.5);
			\draw (5, -1) circle (0.5);
			\draw (5, -1) circle (0.1);
			\draw (6.5,0.7) arc (-90:-270:0.3);
			\draw (6.5, 1.3) to [out = 0, in = 90] (7, 1) to [out = -90, in = 0] (6.5, 0.7);
			\draw (7.7, 0.7) to [out = 180, in = -90] (7.2, 1) to [out = 90, in = 180] (7.7, 1.3);
			\draw (7.7, 1.3) arc (90:-90:0.3);
			\draw (6.5, -1.3) arc (-90:-270:0.3);
			\draw (6.5, -0.7) to [out = 0, in = 90] (7, -1) to [out = -90, in = 0] (6.5, -1.3);
			\draw (7.7, -1.3) to [out = 180, in = -90] (7.2, -1) to [out = 90, in = 180] (7.7, -0.7);
			\draw (7.7, -0.7) arc (90:-90:0.3);
			\draw (6.5, -1) circle (0.1);
			\node at (6,-2.3) {($\xrightarrow{\text{saddle}}$, birth$\downarrow$)};
			\node at (6, -2.9) {(b)};

			\draw (10, 0) -- (14, 0);
			\draw (12, -2) -- (12, 2);
			\draw (11, 1) circle (0.5);
			\draw (12.5,0.7) arc (-90:-270:0.3);
			\draw (12.5, 1.3) to [out = 0, in = 90] (13, 1) to [out = -90, in = 0] (12.5, 0.7);
			\draw (13.7, 0.7) to [out = 180, in = -90] (13.2, 1) to [out = 90, in = 180] (13.7, 1.3);
			\draw (13.7, 1.3) arc (90:-90:0.3);
			\draw (11.3, -0.5) arc (0:180:0.3);
			\draw (11.3, -0.5) to [out=-90, in=0] (11,-1) to [out=180, in=-90] (10.7, -0.5);
			\draw (11, -1.2) to [out = 0, in = 90] (11.3, -1.7);
			\draw (11, -1.2) to [out = 180, in = 90] (10.7, -1.7);
			\draw (11.3, -1.7) arc (0:-180:0.3);
			\draw (12.2,-0.5) arc (180:90:0.3);
			\draw (12.2,-0.5) to [out = -90, in = 180] (12.5,-1);
			\draw (12.5,-0.2) to [out = 0, in = 90] (13,-0.5);
			\draw (12.5,-1) to [out = 0, in = -90] (12.7,-0.8) to [out = 90, in = 180] (12.85,-0.65) to [out = 0, in = -90] (13,-0.5);
			\draw (12.5,-1.2) to [out = 0, in = 90] (12.8,-1.7);
			\draw (12.5,-1.2) to [out = 180, in = 90] (12.2,-1.7);
			\draw (12.8,-1.7) arc (360:180:0.3);
			\draw (12.2,-0.5) arc (180:90:0.3);
			\draw (13.7,-0.8) to [out = 180, in = -90] (13.2,-0.5) to [out = 90, in = 180] (13.7,-0.2);
			\draw (13.7,-0.2) arc (90:-90:0.3);
			\node at (12,-2.3) {($\xrightarrow{\text{saddle}}$, saddle$\downarrow$)};
			\node at (12, -2.9) {(c)};

			\draw (1, -6) -- (5, -6);
			\draw (3, -4) -- (3, -8);
			\draw (2, -7) ellipse (0.7 and 0.3);
			\draw (2, -7) ellipse (0.9 and 0.5);
			\draw (2.9, -5) arc (0:150:0.9 and 0.5);
			\draw (2.9, -5) arc (0:-150:0.9 and 0.5);
			\draw (2.7, -5) arc (0:150:0.7 and 0.3);
			\draw (2.7, -5) arc (0:-150:0.7 and 0.3);
			\draw (1.4, -5.16) arc (35:205:0.1);
			\draw (1.4, -4.84) arc (-35:-205:0.1);
			\draw (3.1,-7) arc (180:30:0.9 and 0.5);
			\draw (3.1,-7) arc (-180:-30:0.9 and 0.5);
			\draw (3.3,-7) arc (180:30:0.7 and 0.3);
			\draw (3.3,-7) arc (-180:-30:0.7 and 0.3);
			\draw (4.6,-7.16) arc (180-35:180-205:0.1);
			\draw (4.6,-6.84) arc (180+35:180+205:0.1);
			\draw (4.6,-5.16) arc (180-35:180-205:0.1);
			\draw (4.6,-4.84) arc (180+35:180+205:0.1);
			\draw (3.4, -5.16) arc (35:205:0.1);
			\draw (3.4, -4.84) arc (-35:-205:0.1);
			\draw (3.4, -5.16) [out = -35, in = 215] to (4.6, -5.16);
			\draw (3.23, -5.26) [out = -35, in = 215] to (4.77, -5.26);
			\draw (3.4, -4.84) [out = 35, in = -215] to (4.6, -4.84);
			\draw (3.23, -4.74) [out = 35, in = -215] to (4.77, -4.74);
			\node at (3,-8.3) {($\xrightarrow{\text{saddle}}$, saddle$\downarrow$)};
			\node at (3,-8.9) {(d)};

			\draw (9, -4) arc (0:-90:2);
			\draw (9,-8) arc (0:90:2);
			\draw  (7.3,-7.3) circle (0.1);
			\draw  (8,-7.3) circle (0.4);

			\draw (7,-5.1) arc (-90:-270:0.3);
			\draw (7,-4.5) to [out = 0, in = 90] (7.5,-4.8) to [out = -90, in = 0] (7,-5.1);
			\draw (8.2,-5.1) to [out = 180, in = -90] (7.7,-4.8) to [out = 90, in = 180] (8.2,-4.5);
			\draw (8.2,-4.5) arc (90:-90:0.3);
			\draw (10.1,-5.5) arc (90:-90:0.5);
			\draw (9.1, -6) to [out = 90, in = 180] (10.1,-5.5);
			\draw (9.1, -6) to [out = -90, in = 180] (10.1,-6.5);
			\node at (9,-8.3) {(cusp or "degenerate neck pinch")};
			\node at (9,-8.9) {(e)};
		\end{tikzpicture}
		\caption{}
	\end{figure}

	We need to see that we can lift the obvious small null homotopy of $\overline{\gamma_0}$ in $\delta_0$ to a null homotopy of $\gamma_0$ in the preimage $Y_{\delta_0} \subset Y_h$. The difficulty is that $Y_{\delta_0} \ra \delta_0$ is not a fibration but the combinations is easily checked by hand.

	In cases (a), (b), (c), and (e) of Figure 1 one deforms $\gamma_0$ into the open quadrant $k$ with most circles---in fact, $Y_{\delta_0}$ deformation retracts to the preimage $Y_k$ of $k \subset \delta_0$. Case (d) deformation retracts along the positive diagonal to the 3D model sketched in Figure 2. In this model we readily check that any loop in the surface complement is homotopic to an end in either the upper or lower generic (compactified) planar slice, and thus by Lemma 1 maps trivially to $\pi_1(Y)$.

	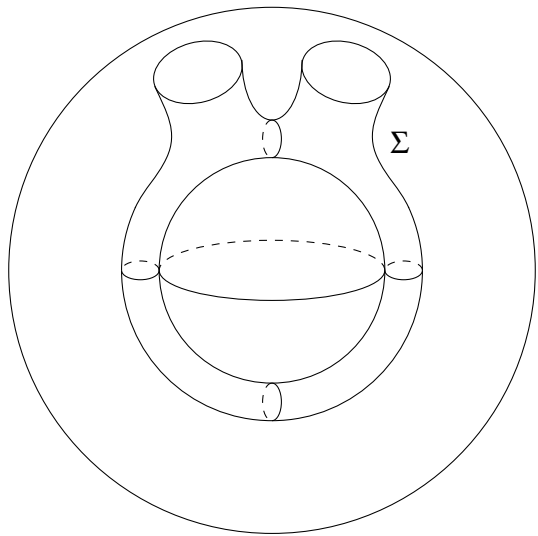
\begin{figure}[!ht]
		\begin{tikzpicture}
			\draw (0,0) circle (3.5);
			\draw (0,0) circle (1.5);
			\draw (1.5,0) arc (0:-180:1.5 and 0.4);
			\draw [dashed] (1.5,0) arc (0:180:1.5 and 0.4);
			\draw (2,0) arc (0:-180:2);
			\draw (2,0) arc (0:25:2);
			\draw (-2,0) arc (180:155:2);

			\draw [rotate=15] (-0.27,2.8) ellipse (0.6 and 0.4);
			\draw [rotate=-15] (0.27,2.8) ellipse (0.6 and 0.4);
			\draw (-0.4,2.8) arc (-180:0:0.4 and 0.8);
			\draw (-1.82, 0.83) to [out = 65, in = 245] (-1.4, 1.5) to [out = 65, in = -65] (-1.56, 2.45);
			\draw (1.82, 0.83) to [out = 115, in = -65] (1.4, 1.5) to [out = 115, in = 245] (1.56, 2.45);
			\draw (0, 2) arc (90:-90:0.125 and 0.25);
			\draw [dashed] (0, 2) arc (90:270:0.125 and 0.25);
			\draw (0, -1.5) arc (90:-90:0.125 and 0.25);
			\draw [dashed] (0, -1.5) arc (90:270:0.125 and 0.25);
			\draw (2,0) arc (0:-180:0.25 and 0.125);
			\draw [dashed] (2,0) arc (0:180:0.25 and 0.125);
			\draw (-1.5,0) arc (0:-180:0.25 and 0.125);
			\draw [dashed] (-1.5,0) arc (0:180:0.25 and 0.125);
			\node at (1.7,1.7) {$\Sigma$};
		\end{tikzpicture}
		\caption{Spheres are (compactified) planar slices. The surface $\Sigma$ is the trace of Figure 1(d) along the major diagonal.}
	\end{figure}

	This completes the proof of Lemma 4.
\end{proof}

\newpage
We are ready to prove Theorem 1. Consider the diagram:

\begin{tikzpicture}
	\node at (0,0) {$\widetilde{Y}$};
	\node at (0.5, 0) {$\rightarrow$};
	\node at (1, 0) {$Y$};
	\node at (1.5,0) {$\hookrightarrow$};
	\node at (2.3,0) {$\mathbb{R}^{n+2}$};
	\node at (3,0) {$\rightarrow$};
	\node at (3.5,0) {$\mathbb{R}^n$};

	\node [rotate=90] at (0,-0.5) {$\subset$};
	\node [rotate=90] at (1,-0.5) {$\subset$};
	\node [rotate=90] at (3.5,-0.5) {$\subset$};
	\node at (0,-1) {$\widetilde{Y}_h$};
	\node at (1,-1) {$Y_h$};
	\node at (0.5,-1) {$\rightarrow$};
	\node at (3.5,-1) {$h$};
	\node at (2.3, -1) {$\xrightarrow{\hspace{3em}}$};
\end{tikzpicture}

Each component of $Y_n$ maps $\pi_{1}$-trivially into $Y$, so in the covering space each component of the preimage $\widetilde{Y}_h$ maps trivially into $\widetilde{Y}$. Consider the handle index $i$ for each $i$ a "color." The $\{\widetilde{Y}_h\}$ divide $\widetilde{Y}$ into (possibly disconnected but non-pathological) "tiles" of $n+1$ colors, $0 \leq i \leq n$, and each tile's inclusion induces the zero map

\[
H_1(\widetilde{Y}_h;F) \ra H_1(\widetilde{Y};F) \tag{$\ast$}
\]

\noindent
on homology (with coefficients in an arbitrary field $F$). We may now make a "Lusternik-Schnirelmann" style argument. First group the $Y_{h_i}$ into larger groups $Y_i$.

Let $\widetilde{Y}_i = \bigcup_{\text{all handles of index }i} \widetilde{Y}_{h_i}$, $\widetilde{Y} = \bigcup_{i=0}^n \widetilde{Y}_i$. Let $\alpha_0, \dots, \alpha_n$ be $n+1$ elements of $H^1(\widetilde{Y};F)$. Consider the exact sequence:

\[
H^1(\widetilde{Y}, \widetilde{Y}_i;F) \xrightarrow{\text{inc}^\ast} H^1(\widetilde{Y};F) \xrightarrow{\text{inc}^\ast} H^1(\widetilde{Y}_i;F) \xrightarrow{\delta} H^2(\widetilde{Y}, \widetilde{Y}_i; F)
\]

$\delta$ is an injection since dually the boundary map $H_2(\widetilde{Y}, \widetilde{Y}_i, F) \xrightarrow{\de} H_1(\widetilde{Y}_i;F) \ra 0$ is onto since by ($\ast$) the map to the next term is zero. Thus the first $\text{inc}^\ast$ is a surjection, so lift $\alpha_i$ to $\overline{\alpha}_i \in H^1(\widetilde{Y}, \widetilde{Y}_i; F)$, $0 \leq i \leq n$.

$\overline{\alpha}_0 \cup \cdots \cup \overline{\alpha}_n \in H^{n+1}(\widetilde{Y}, \bigcup \widetilde{Y}_i; F) = H^{n+1}(\widetilde{Y}, \widetilde{Y}; F) \cong 0$, but any cochain representative for $\overline{\alpha}_i$ is also a cochain representative for $\alpha_i$, $0 \leq i \leq n$, so the cup product computation may be done with representatives $\{\overline{\alpha}_i, 0 \leq i \leq n\}$. Thus $\alpha_0 \cup \cdots \cup \alpha_n = 0 \in H^n(\widetilde{Y};F)$, showing that $K$ is cohomologically poor.
\qed

\section{Examples}

Since all classical knots are cohomologically rich we begin our induction with the famous Cappell-Shaneson 2-knot $K_2$ 
with fiber a punctured 3-torus $T^3_-$ 
and linear monodromy matrix 

\[
A_2=\begin{bmatrix} 0 & 1 & 0 \\ 0 & 0 & 1 \\ -1 & 1 & 0 \end{bmatrix}.
\] 
See \cite{cappells}. Initially there was a question if this fibered knot lay in $S^4$ or perhaps an exotic homotopy 4-sphere, 
however, the question was resolved in favor of $S^4$ by Gompf in \cite{gompf}. 
Picking $\Z_2$ as our coefficient field, consider the image of $A_2$ in the finite group $SL(3, \Z_2)$, 
where it has some finite order $t_2$.

Let $\widetilde{Y}$ be the $t_2$ cyclic cover of $Y \coloneqq S^4 \setminus K_2$. We claim $\widetilde{Y}$ has the same $Z_2$-cohomology and 
cup product structure as $T^3_- \times S^1$. The closed manifold $M$ in Lemma 5 is built from the same monodromy $t_2A_2$ as $\widetilde{Y}$ but applied to the closed $T^3$ instead of $T^3_-$. $\widetilde{Y}$ is obtained from $M$ by deleting a circle representing the class $U$; the products $XYU$, $XZU$, and $YZU$ remain nontrivial, showing that $Y$ is cohomologically rich.

According to Zeeman \cite{zeeman65}, the $t_2$-twist-spun knot $K_3: S^3 \hookrightarrow S^5$ will also be fibered  
with fiber $F^4_-$ the (punctured) $t_2$-fold cyclic branched cover of $S^4$ around $K_2$. 
The closed fiber $F^4$ will be a $\Z_2$-cohomology torus: $A_2^{t_2} = \id \in SL(3, \Z_2)$, so $F^4\backslash$meridional circle has the
$\Z_2$-cohomology of  $T^3_- \times S^1$. 

Above the classical dimension there is only one possibility for regluing the meridional circle so $F^4$ is a $\Z_2$-cohomology $T^4$, 
in the sense of admitting a map from $T^4$ inducing an isomorphism on $H^\ast(-;\Z_2)$.
That it actually has the correct ring structure follows from the following lemma.

\begin{lemma}
Let $M$ be the mapping torus of the diffeomorphism of 
$T^3=\R^3/\Z^3$ determined by a matrix $A\in{SL(3,\Z)}$.
If  $A\equiv{I}$ mod $(p)$ for some prime $p$ then
$H^*(M;\Z_p)$ is generated by $H^1(M;\Z_p)$.
\end{lemma}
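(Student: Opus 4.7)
The plan is to run the Serre spectral sequence of the mapping-torus fibration $T^3\to M\to S^1$ with $\Z_p$ coefficients. Because $A\equiv I\pmod p$, the monodromy acts trivially on $H^*(T^3;\Z_p)$, and since $S^1$ contributes only the columns $s=0,1$, no nonzero differential $d_r$ with $r\geq 2$ is possible. Hence the spectral sequence collapses at $E_2=H^*(S^1;\Z_p)\otimes H^*(T^3;\Z_p)=\Lambda_{\Z_p}(t,\bar x,\bar y,\bar z)$, the exterior algebra on four degree-$1$ generators, yielding the Betti numbers $(1,4,6,4,1)$ of $T^4$.

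I would then choose lifts in degree one. Set $\tau=\pi^*(t)\in H^1(M;\Z_p)$, pulled back from the base. The only potential obstruction to lifting the fibre classes is $d_2\colon E_2^{0,1}\to E_2^{2,0}$, which vanishes for column reasons, so the edge map $H^1(M;\Z_p)\to H^1(T^3;\Z_p)$ is surjective; pick $X,Y,Z\in H^1(M;\Z_p)$ restricting to $\bar x,\bar y,\bar z$. These four classes span $H^1(M;\Z_p)\cong\Z_p^4$.

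Next I would show that cup products of $\{\tau,X,Y,Z\}$ span $H^k(M;\Z_p)$ for every $k$. The Serre filtration $F^0\supset F^1\supset F^2=0$ makes $H^*(M;\Z_p)$ a filtered ring with associated graded $E_\infty$; under this filtration $\tau$ has image $t\in E_\infty^{1,0}$ (filtration $1$, pulled from the base), while each of $X,Y,Z$ has image $\bar x,\bar y,\bar z\in E_\infty^{0,1}$ (filtration $0$, via fibre restriction). By multiplicativity, a monomial like $X\cup Y$ has image $\bar x\bar y\in E_\infty^{0,2}$ and a mixed monomial like $X\cup\tau$ sits in $F^1$ with image $t\bar x\in E_\infty^{1,1}$. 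In each degree $n\leq 4$ the $\binom{4}{n}$ wedge-monomials in the four chosen classes thus map to a basis of the associated graded in total degree $n$, and a downward induction on filtration (using $0\to F^{p+1}H^n\to F^p H^n\to E_\infty^{p,n-p}\to 0$) upgrades this to the statement that the monomials themselves span $H^n(M;\Z_p)$.

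The only real subtlety is conceptual: the spectral sequence only computes the associated graded ring, so \emph{a priori} the multiplication on $H^*(M;\Z_p)$ could differ from that on $H^*(T^4;\Z_p)$ by filtration-lowering extension terms. This does not obstruct the generation statement, however, since a set of monomials whose images generate the associated graded automatically generates the filtered ring itself.
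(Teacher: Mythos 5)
Your proof is correct, but it takes a genuinely different route from the paper's. The paper works directly with a Kronecker-dual basis $U,X,Y,Z$ of $H^1(M;\Z_p)$: it shows $XYZ\neq 0$ by restriction to the fibre $T^3$, deduces $UXYZ\neq 0$ from the nondegeneracy of Poincar\'e duality on the closed $4$-manifold $M$, and concludes that the four triple products are Poincar\'e dual to the $H_1$-basis and hence form a basis of $H^3$; the rest "follows easily." You instead run the Serre spectral sequence of $T^3\to M\to S^1$: triviality of the mod-$p$ monodromy plus the two-column base force collapse at $E_2$, and the multiplicative filtration then gives the spanning monomials. Both arguments are complete. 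Your route avoids Poincar\'e duality entirely, so it would survive in settings where the total space is not a closed orientable manifold, and it makes the "generated by $H^1$" statement structurally transparent; the paper's is shorter and more geometric, though it implicitly relies on the same collapse to know that the restriction $H^1(M;\Z_p)\to H^1(T^3;\Z_p)$ hits a basis. One small precision worth making explicit in your write-up: the $\binom{4}{n}$ monomials in degree $n$ do not all land in a single $E_\infty^{p,n-p}$ (those containing $\tau$ sit in filtration $1$, the others in filtration $0$), so "map to a basis of the associated graded" should be read filtration-piece by filtration-piece, which is exactly how your short exact sequence then uses it.
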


\begin{proof}
Let $x,y,z\in\pi=\pi_1(M)$ represent a basis for the image of $\pi_1(T^3)=\Z^3$,
and let $u\in\pi$ generate the quotient.
Then $H_1(M;\mathbb{Z}_p)\cong\Z_p^4$.
Let $U,X,Y,Z$ be the Kronecker dual basis for $H^1(M;\Z_p)$
(so $U(u)=1$ and $U(x)=U(y)=U(z)=0$, etc.).
Then $XYZ\not=0$ in $H^3(M;\Z_p)$, 
since it has nonzero restriction to $H^3(T^3;\Z_p)$.
Hence $UXYZ\not=0$ in $H^4(M;\Z_p)$,
by the non-degeneracy of Poincar\'e duality.
Hence each of the triple products of these basis elements is non-zero.
As they are clearly the Poincar\'e duals of the basis for $H_1(M;\Z_p)$
represented by $u,x,y,z$, they are a basis for $H^3(M;\Z_p)$, and
the lemma follows easily.
\end{proof}

Now inductively apply \cite{zeeman65}. 
Given $K^n: S^n \hookrightarrow S^{n+2}$, fibered with fiber a punctured $\Z_2$-cohomology $T^{n+2}$, 
let $t_n$ be the order of the monodromy measured in $SL(n+1, \Z_2)$. 
The $t_n$-twist-spin of $K^n$, $K^{n+1}: S^{n+1} \hookrightarrow S^{n+3}$ is again fibered with fibered with fiber the punctured, 
$t_n$-fold cyclic branched cover of $S^{n+2}$ about $K^n(S^n)$. 
Again, since the monodromy of the $t_n^{\text{th}}$ power is trivial in $SL(n+1;\Z_2)$ the closed fiber $F^{n+2}$
is a $\Z_2$-cohomology $T^{n+2}$. 
It in turn has a monodromy of finite order $t_{n+1}$ in $SL(n+2, \Z_2)$ and thus a $t_{n+1}$-fold cyclic cover $\widetilde{Y}^{n+1}$ 
which is diffeomorphic to $F^{n+2}_- \widetilde{\times} S^1$, with 
the $\Z_2$ cohomology ring structure of $T^{n+2}_- \times S^1$, 
and nontrivial $n+1$-fold cup products.

\begin{note}
The matrix $A_2$ actually has order $t_2=7$, 
since the {\it mod-}2 reduction of the characteristic polynomial 
$\det(tI-A_2)=t^3+t+1$ divides $t^7-1$ in $\Z_2[t]$.
In fact every $3\times3$ Cappell-Shaneson matrix has image of order 7 in $SL(3,\Z_2)$,
since the only other possibility for the {\it mod-}2 reduction of the characteristic polynomial  
is  $t^3+t^2+1$, which also divides $t^7-1$ in $\Z_2[t]$.
There is no such universal value for $t_n$ when $n>2$.
\end{note}

\newpage

\begin{bibdiv}
\begin{biblist}

\bib{gabai}{article}{
	title={Foliations and the topology of 3-manifolds},
	author={Gabai, David},
	journal={J. Diff. Geom.},
	volume={26},
	number={3},
	pages={479-536},
	year={1983}
}

\bib{l64}{article}{
	title = {Triangulation of semi-analytic sets},
	author = {Lojasiewicz, Stanislaw},
	journal = {Annali della Scuola Normale Superiore di Pisa - Classe di Scienze},
	number = {4},
	pages = {449-474},
	volume = {18},
	year = {1964}
}

\bib{SR}{article}{
	title={Comparing Heegaard splittings of non-Haken 3-manifolds},
	author={Rubinstein, Hyam},
	author={Scharlemann, Martin},
	journal={Topology},
	volume={35},
	number={4},
	pages={1005-1026},
	year={1996}
}

\bib{cappells}{article}{
	title={There exist inequivalent knots with the same complement},
	author={Cappell, Sylvain},
	author={Shaneson, Julius},
	journal={Annals of Mathematics},
	pages={349-353},
	year={1976},
	volume = {103},
	number = {3}
}

\bib{gompf}{article}{
	title = {More Cappell-Shaneson spheres are standard},
	author = {Gompf, Robert},
	journal = {Algebr. Geom. Topol.},
	volume = {10},
	year = {2010},
	pages = {1665-1681}
}

\bib{zeeman65}{article}{
	title = {Twisting spun knots},
	author = {Zeeman, Christopher E.},
	journal = {Transactions of the American Mathematical Society},
	volume = {115},
	date = {1965},
	pages = {471-495}
}

\end{biblist}
\end{bibdiv}

\end{document}